\newcommand{\Ker}[1]{\operatorname{Ker}\left(#1\right)}
\newcommand{\Cok}[1]{\operatorname{Cok}\left(#1\right)}
\newcommand{\Img}[1]{\operatorname{Im}\left(#1\right)}
\newcommand{\A}{\mathbb{A}}
\newcommand{\B}{\mathbb{B}}
\newcommand{\X}{\mathbb{X}}
\newcommand{\R}{\mathbb{R}}
\newcommand{\Z}{\mathbb{Z}}
\DeclareMathOperator{\spa}{Span}
\theoremstyle{plain}%
\newtheorem{theorem}{Theorem}[section]%
\newtheorem{corollary}[theorem]{Corollary}
 \theoremstyle{definition}%
\newtheorem{example}{Example}%
\newtheorem{definition}{Definition}%
\begin{document}

\title{Identifying cobordisms using kernel persistence}

\author{Y. B. Bleile, L. Fajstrup, T. Heiss, A. M. Svane, S. S. Sørensen }
\date{}

\maketitle
\begin{abstract}{Motivated by applications in chemistry, we give a homlogical definition of  tunnels, or more generally cobordisms, connecting disjoint parts of a cell complex. For a filtered complex, this defines a persistence module. We give a method for identifying birth and death times using kernel persistence and  a matrix reduction algorithm for pairing birth and death times. 
}
\end{abstract}

\section{Introduction}

Matter is built up by connected atoms. Depending on elemental constituents, temperature, pressure, kinetics, and thermodynamics, the atoms form chemical structures \cite{s1}. At the nanoscale, this presents itself as bonds between atoms, yet these connections also form larger structures which for many inorganic solids are represented as networks of atoms on the scale of nano- to micrometres. It is well-known that the packing of the atoms into dense or void-filled structures directly affect physical and chemical properties of materials \cite{s2}. This is particularly relevant in applications where atoms need to move, e.g., solid state battery materials where lithium ions travel through “tunnels” in a fixed network or gas adsorption where e.g.\ CO$_2$ needs to be adsorbed to a surface within a porous material \cite{s3,s5,s4}. However, identifying and characterizing packing of atoms beyond the very nearest neighbours, for example tunnels in the void space where atoms can move through, is inherently difficult. This is particularly true in materials which are non-crystalline (i.e., they lack the atomic periodicity from crystals exploited e.g.\ in \cite{crystal}) such as glasses. To overcome this, the development of new methods is needed.  A previous approach applied a reeb graph reconstruction of the space between atoms \cite{matteo}, which, however, was computationally expensive. Persistent homology, has the advantage of being computationally efficient and has previously proven useful for characterizing glass materials \cite{s7,s6}. Thus, in this paper, we take a homological approach to modelling tunnels between atoms.

We model the atoms of a material as a point cloud in a 3D box and look for tunnels through the void space  connecting the top and bottom of the box, see left part of Figure \ref{fig:intro} for a 2D-visualization. More precisely, if we place balls of radius $r$ around each atom, a tunnel of width $r$ will be a connected component in the complement of the balls intersected with the box that intersects both top and bottom of the box, see right part of Figure \ref{fig:intro}. Equivalently, such a tunnel can be represented by a 2-dimensional surface in the union of balls that surrounds the tunnel. We give a homological definition way of counting such tunnels. When the radius $r$ of the balls increases, such 
tunnels appear and disappear again --- they are born and die again.  The birth time of a tunnel is interpreted as the first radius where the tunnel becomes separated from an existing tunnel or from the unbounded component except for the opening at the top and the bottom.  The death time is the first radius where there is no longer a passage through the tunnel, 
corresponding to the ``bottleneck thickness'' of the tunnel, i.e.\ the largest radius of a ball that is able to roll through the tunnel.
We give an algorithm for computing pairs of birth and death times closely related to kernel persistence \cite{kernel-persistence}. The algorithm will be set up more generally for detecting $(k+1)$-dimensional open cobordism-like structures connecting disjoint subcomplexes of a filtered cell complex. 

The paper is structured as follows. We define a homological way of representing cobordisms inside a cell complex 
in \Cref{sec:tunnels}. Then, in \Cref{sec:persistence}, we consider the situation where the cell complex is filtered and define a persistence module that captures the births and deaths of these open cobordisms. The birth and death times can be identified using kernel persistence \cite{kernel-persistence}. However, pairing the birth and death times requires a new algorithm, which is presented in \Cref{sec:alg}. 

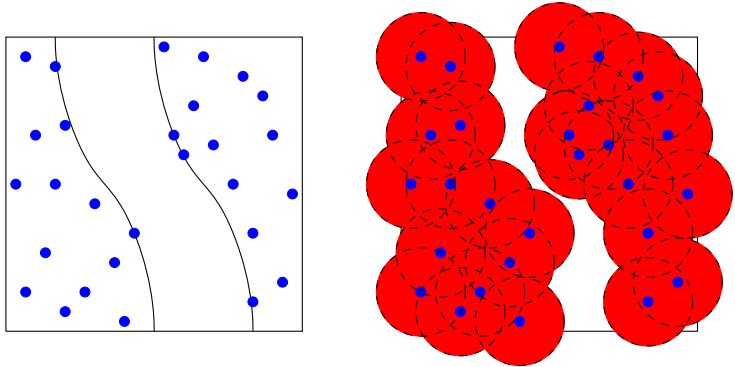
\begin{figure}
\begin{center}
\begin{tikzpicture}[scale=1.3]
\draw (0.5,0) -- (3.5,0) -- (3.5,3) -- (0.5,3) -- (0.5,0);
\draw plot [smooth, tension=1] coordinates { (2,0) (1.8,1) (1.2,2) (1,3)};
\draw plot [smooth, tension=1] coordinates { (3,0) (2.8,1) (2.2,2) (2,3)};
\draw[fill,blue]  (1.8,1) circle (0.05);
\draw[fill, blue]  (2.2,2) circle (0.05);
\draw[fill,blue]  (3,0.3) circle (0.05);
\draw[fill,blue]  (1,2.7) circle (0.05);

\draw[fill,blue]  (0.6,1.5) circle (0.05);
\draw[fill,blue]  (0.7,0.4) circle (0.05);
\draw[fill,blue]  (0.8,2) circle (0.05);
\draw[fill,blue]  (0.9,0.8) circle (0.05);
\draw[fill,blue]  (1,1.5) circle (0.05);
\draw[fill,blue]  (1.1,0.2) circle (0.05);
\draw[fill,blue]  (1.3,0.4) circle (0.05);
\draw[fill,blue]  (1.4,1.3) circle (0.05);
\draw[fill,blue]  (1.6,0.7) circle (0.05);
\draw[fill,blue]  (1.7,0.1) circle (0.05);
\draw[fill,blue]  (0.7,2.8) circle (0.05);
\draw[fill,blue]  (1.1,2.1) circle (0.05);

\draw[fill,blue]  (2.1,2.9) circle (0.05);
\draw[fill,blue]  (2.4,2.3) circle (0.05);
\draw[fill,blue]  (2.3,1.8) circle (0.05);
\draw[fill,blue]  (2.5,2.8) circle (0.05);
\draw[fill,blue]  (2.6,1.9) circle (0.05);
\draw[fill,blue]  (2.9,2.6) circle (0.05);
\draw[fill,blue]  (2.8,1.5) circle (0.05);
\draw[fill,blue]  (3.1,2.4) circle (0.05);
\draw[fill,blue]  (3,1) circle (0.05);
\draw[fill,blue]  (3.2,2) circle (0.05);
\draw[fill,blue]  (3.3,0.5) circle (0.05);
\draw[fill,blue]  (3.4,1.4) circle (0.05);

\draw (4.5,0) -- (7.5,0) -- (7.5,3) -- (4.5,3) -- (4.5,0);

\draw[fill,red]  (5.8,1) circle (0.45);
\draw[fill,red]  (6.2,2) circle (0.45);
\draw[fill,red]  (7,0.3) circle (0.45);
\draw[fill,red]  (5,2.7) circle (0.45);

\draw[fill,red]  (4.6,1.5) circle (0.45);
\draw[fill,red]  (4.7,0.4) circle (0.45);
\draw[fill,red]  (4.8,2) circle (0.45);
\draw[fill,red]  (4.9,0.8) circle (0.45);
\draw[fill,red]  (5,1.5) circle (0.45);
\draw[fill,red]  (5.1,0.2) circle (0.45);
\draw[fill,red]  (5.3,0.4) circle (0.45);
\draw[fill,red]  (5.4,1.3) circle (0.45);
\draw[fill,red]  (5.6,0.7) circle (0.45);
\draw[fill,red]  (5.7,0.1) circle (0.45);
\draw[fill,red]  (4.7,2.8) circle (0.45);
\draw[fill,red]  (5.1,2.1) circle (0.45);

\draw[fill,red]  (6.1,2.9) circle (0.45);
\draw[fill,red]  (6.4,2.3) circle (0.45);
\draw[fill,red]  (6.3,1.8) circle (0.45);
\draw[fill,red]  (6.5,2.8) circle (0.45);
\draw[fill,red]  (6.6,1.9) circle (0.45);
\draw[fill,red]  (6.9,2.6) circle (0.45);
\draw[fill,red]  (6.8,1.5) circle (0.45);
\draw[fill,red]  (7.1,2.4) circle (0.45);
\draw[fill,red]  (7,1) circle (0.45);
\draw[fill,red]  (7.2,2) circle (0.45);
\draw[fill,red]  (7.3,0.5) circle (0.45);
\draw[fill,red]  (7.4,1.4) circle (0.45);

\draw[fill,blue]  (5.8,1) circle (0.05);
\draw[fill,blue]  (6.2,2) circle (0.05);
\draw[fill,blue]  (7,0.3) circle (0.05);
\draw[fill,blue]  (5,2.7) circle (0.05);

\draw[fill,blue]  (4.6,1.5) circle (0.05);
\draw[fill,blue]  (4.7,0.4) circle (0.05);
\draw[fill,blue]  (4.8,2) circle (0.05);
\draw[fill,blue]  (4.9,0.8) circle (0.05);
\draw[fill,blue]  (5,1.5) circle (0.05);
\draw[fill,blue]  (5.1,0.2) circle (0.05);
\draw[fill,blue]  (5.3,0.4) circle (0.05);
\draw[fill,blue]  (5.4,1.3) circle (0.05);
\draw[fill,blue]  (5.6,0.7) circle (0.05);
\draw[fill,blue]  (5.7,0.1) circle (0.05);
\draw[fill,blue]  (4.7,2.8) circle (0.05);
\draw[fill,blue]  (5.1,2.1) circle (0.05);

\draw[fill,blue]  (6.1,2.9) circle (0.05);
\draw[fill,blue]  (6.4,2.3) circle (0.05);
\draw[fill,blue]  (6.3,1.8) circle (0.05);
\draw[fill,blue]  (6.5,2.8) circle (0.05);
\draw[fill,blue]  (6.6,1.9) circle (0.05);
\draw[fill,blue]  (6.9,2.6) circle (0.05);
\draw[fill,blue]  (6.8,1.5) circle (0.05);
\draw[fill,blue]  (7.1,2.4) circle (0.05);
\draw[fill,blue]  (7,1) circle (0.05);
\draw[fill,blue]  (7.2,2) circle (0.05);
\draw[fill,blue]  (7.3,0.5) circle (0.05);
\draw[fill,blue]  (7.4,1.4) circle (0.05);

\draw[dashed]  (5.8,1) circle (0.45);
\draw[dashed]  (6.2,2) circle (0.45);
\draw[dashed]  (7,0.3) circle (0.45);
\draw[dashed]  (5,2.7) circle (0.45);

\draw[dashed]  (4.6,1.5) circle (0.45);
\draw[dashed]  (4.7,0.4) circle (0.45);
\draw[dashed]  (4.8,2) circle (0.45);
\draw[dashed]  (4.9,0.8) circle (0.45);
\draw[dashed]  (5,1.5) circle (0.45);
\draw[dashed]  (5.1,0.2) circle (0.45);
\draw[dashed]  (5.3,0.4) circle (0.45);
\draw[dashed]  (5.4,1.3) circle (0.45);
\draw[dashed]  (5.6,0.7) circle (0.45);
\draw[dashed]  (5.7,0.1) circle (0.45);
\draw[dashed]  (4.7,2.8) circle (0.45);
\draw[dashed]  (5.1,2.1) circle (0.45);

\draw[dashed]  (6.1,2.9) circle (0.45);
\draw[dashed]  (6.4,2.3) circle (0.45);
\draw[dashed]  (6.3,1.8) circle (0.45);
\draw[dashed]  (6.5,2.8) circle (0.45);
\draw[dashed]  (6.6,1.9) circle (0.45);
\draw[dashed]  (6.9,2.6) circle (0.45);
\draw[dashed]  (6.8,1.5) circle (0.45);
\draw[dashed]  (7.1,2.4) circle (0.45);
\draw[dashed]  (7,1) circle (0.45);
\draw[dashed]  (7.2,2) circle (0.45);
\draw[dashed]  (7.3,0.5) circle (0.45);
\draw[dashed]  (7.4,1.4) circle (0.45);
\end{tikzpicture}
\end{center}
\caption{Left: tunnel through a point cloud connecting top and bottom. Right: Balls placed around points. The tunnel now corresponds to a path in the complement connecting top and bottom}\label{fig:intro}
\end{figure}

\section{A homological way of representing cobordisms}\label{sec:tunnels}
Let $\X$ be a finite cell complex, by which we will mean that $\X$ is a regular CW complex  with finitely many cells. 
Regularity means that not only the interior of each $k$-cell is homeomorphic to an open $k$-dimensional ball, but the homeomorphism also extends to the boundaries of the cell and the ball. \cite[Extended Appendix]{hatcher}. In particular, the boundary of each $k$-cell must be the union of $(k-1)$-dimensional cells.
Simplicial and cubical complexes \cite{cube} are examples of cell complexes. We will sometimes think of $\X$ as a topological space and sometimes as a combinatorial object hoping this causes no confusion.

         Let $\X$ be a cell complex, and take two disjoint non-empty subcomplexes $\A, \B \subseteq \X$. Then, $\A \cup \B$ is a non-empty subcomplex of $\X$. Consider the three maps on homology (with coefficients in $\Z/2\Z$ understood) induced by the inclusion maps
    \begin{align}\nonumber
        \iota^{\A}_{\ast}: {}&H_{*}\left( \A \right)\rightarrow H_{*}\left( \X\right), \\ \nonumber
        \iota^{\B}_{\ast}: {}&H_{*}\left( \B \right) \rightarrow H_{*}\left( \X \right), \\
        \iota^{\A \cup \B}_{\ast}: {}&H_{*}\left( \A \cup \B \right) = H_{*} \left( \A \right)\oplus H_{*} \left( \B \right) \rightarrow H_{*}\left( \X\right).\label{eq:sum}
    \end{align}
We can then define our main object of interest.
    
\begin{definition}[Cobordisms from $\A$ to $\B$]    
    Let $\X$ be a finite cell complex, and  $\A, \B \subseteq \X$ two disjoint non-empty subcomplexes. 
    Define the map
        \begin{equation}\label{eq:cokPhi}
            \Phi: \Ker{\iota^{\A}_{\ast}} \oplus \Ker{\iota^{\B}_{\ast}} \rightarrow \Ker{\iota^{\A \cup \B}_{\ast}}.
        \end{equation}
        We call $\Cok \Phi$ the \emph{equivalence classes of open cobordisms between $\A$ and $\B$}, and denote by $\Phi_k$ the restriction of $\Phi$ to degree $k$ in the grading.
    \end{definition}

To motivate our terminology, we give the following prototypical example. 

\begin{example}\label{ex:cylinder}
    Consider the triangulated cylinder $\X$ shown  in \Cref{fig:cylinder}. The vertices and edges are as shown in the figure with 12 trangles forming the sides of the cylinder, namely  
   \begin{align*}
        &v_0 v_1 v_4, v_0 v_2 v_5, v_0 v_3 v_4, v_0 v_3 v_5 , v_1 v_2 v_5, v_1 v_4 v_5,\\
        &v_3 v_4 v_7, v_3 v_5 v_8, v_3 v_6 v_7, v_3 v_6 v_8, v_4 v_5 v_8, v_4 v_7 v_8
    \end{align*} 
     Let $\A$ and $\B$ be the subcomplexes generated by the vertex sets $\{ v_0, v_1, v_2\}$ and $\{ v_6, v_7, v_8\}$, respectively. That is, $\A$ consists of the open triangle at the top (red), and $\B$ of the open triangle at the bottom (blue). For $k\in \{0,1\}$, we have the homology graoups $H_k(\X) \cong H_k(\A) \cong H_k(\B)\cong  \Z/2\Z$. Moreover,  $\Ker {\iota_k^\A} \cong \Ker {\iota_k^\B} \cong 0$ and $\Ker {\iota_k^{\A\cup \B}} = \spa \{c_1+c_2\}$, where $c_1$ and $c_2$ are the non-trivial elements of $H_k(\A)$ and $H_k(\B)$, respectively. It follows that 
     $$\Cok {\Phi_k} = \spa\{c_1+c_2\}.$$
     For $k=0$, this means that the two connected components in $\A\cup \B$ form a non-trival element of $\Cok {\Phi_0}$ corresponding to a path connecting them in $\X$. For $k=1$, 
     the union of the two circles in $\A\cup \B$ form a non-trivial element in $\Cok {\Phi_1}$ because the cylinder forms a 2-dimensional simplicial surface having their union as   boundary. 

If we insert the top triangle $v_0v_1v_2$ in $\A$ and consider $\Phi_1$, then $\Ker{\iota_*^\A}\cong H_1(\A)=0$ and  $\Ker{\iota_*^\B}\cong  \Ker{\iota_*^{\A\cup\B}}\cong \Z/2\Z$ because the blue cycle is now trivial in $\X$. Thus, $\Cok{\Phi_1}=0$. The cylinder is no longer counted because it is connects to a trivial cycle at the top. Similarly, if we instead inserted the middle triangle $v_3v_4v_5$, then $\Ker{\iota_*^\A}\cong \Ker{\iota_*^\B}\cong \Z/2\Z$ because both the top and bottom cycles are trivial in $\X$, but not in $\A$ and $\B$, respectively. Moreover,  $\Ker{\iota_*^{\A\cup\B}}\cong \Z/2\Z \oplus \Z/2\Z$ for the same reason, so $\Ker{\Phi_1}=0$. In both cases, inserting a triangle closed the tunnel making it trivial in $\Cok{\Phi_1}$.

    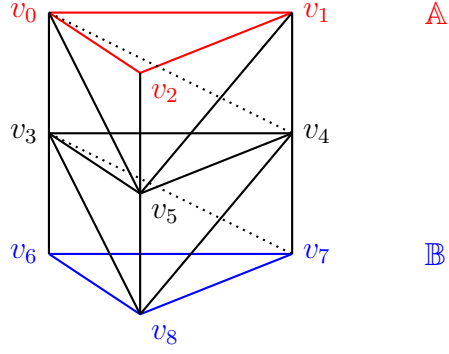
\begin{figure}[h!]
        \centering
        \begin{tikzpicture}[scale = 0.8]
            \draw[red] (-2,2) node[left] (0) {$v_0$};
            \draw[red] (2,2) node[right] (1) {$v_1$};
            \draw[red] (4,2) node[right] (9) {$\A$}; 
            \draw[red] (-0.5,1) node[below right] (2) {$v_2$};
            \draw (-2,0) node[left] (3) {$v_3$};
            \draw (2,0) node[right] (4) {$v_4$};
            \draw (-0.5,-1) node[below right] (5) {$v_5$};
            \draw[blue] (-2,-2) node[left] (6) {$v_6$};
            \draw[blue] (2,-2) node[right] (7) {$v_7$};
            \draw[blue] (4,-2) node[right] (10) {$\B$}; 
            \draw[blue] (-0.5,-3) node[below right] (8) {$v_8$};
            \draw[red, thick] (-2,2) -- (2,2);
            \draw[red, thick] (-2,2) -- (-0.5,1);
            \draw[red, thick] (-0.5,1) -- (2,2);
            \draw[black, thick] (-2,0) -- (2,0);
            \draw[black, thick] (-2,0) -- (-0.5,-1);
            \draw[black, thick] (-0.5,-1) -- (2,0);
            \draw[blue, thick] (-2,-2) -- (2,-2);
            \draw[blue, thick] (-2,-2) -- (-0.5,-3);
            \draw[blue, thick] (-0.5,-3) -- (2,-2);     
            \draw[black, thick] (-2,2) -- (-2,0);
            \draw[black, thick] (-2,0) -- (-2,-2);
            \draw[black, thick] (2,2) -- (2,0);
            \draw[black, thick] (2,0) -- (2,-2);
            \draw[black, thick] (-0.5,1) -- (-0.5,-1);
            \draw[black, thick] (-0.5,-1) -- (-0.5,-3);
            \draw[black, dotted, thick] (-2,2) -- (2,0);
            \draw[black, thick] (-2,2) -- (-0.5,-1);
            \draw[black, thick] (2,2) -- (-0.5, -1);
            \draw[black, thick] (-2,0) -- (-0.5, -3);
            \draw[black, thick, dotted] (-2,0) -- (2,-2);
            \draw[black, thick] (2,0) -- (-0.5, -3);
        \end{tikzpicture}
        \caption{The triangulated cylinder from Example \ref{ex:cylinder}. 
        }
        \label{fig:cylinder}
    \end{figure}
\end{example}
 
In the example, a non-trivial element of $\Cok{ \Phi_k}$ corresponded to a $(k+1)$-dimensional surface whose boundary was the union of a $k$-cycle in $\A$ and a $k$-cycle in $\B$ representing non-trivial elements of $H_k(\A) $ and $H_k(\B)$, respectively.  Such a surface is called a cobordism \cite{stong} thus motivating our terminology. The fact that the cobordism is not counted when the cylinder is closed off justifies the term "open cobordism". A similar interpretation can be given for arbitrary spaces. The rest of this section is devoted to making that rigorous.

\subsection{Definition of $\Phi$ in terms of relative homology}

We have defined $\Phi $ in terms of kernels to be able to draw parallels to the notion of kernel persistence from \cite{kernel-persistence}. However, for some of the proofs and interpretations below, it may be convenient to think of kernels in terms of relative homology. 

Consider the following long exact sequence
\begin{equation*}
	\begin{tikzcd}
		\arrow[r] & H_{*+1}(\X)  \arrow[r, "j_{*}^\A"] & H_{*+1}(\X,\A) \arrow[r, "\partial^\A"]  & H_*(\A)  \arrow[r, "\iota_*^\A"] & H_*(\X) \arrow[r] &.{}
	\end{tikzcd}
\end{equation*}	
Exactness implies that 
\begin{equation}\label{eq:ker_iso}
\Ker{\iota^\A_*} \cong \Img{\partial^\A} \cong H_{*+1}(\X,\A)/\Img{j^\A_{*}}.
\end{equation}
Thus, we will often think of an element in $\Ker{\iota^\A_k}$ as represented by a relative cycle, i.e.\ a $(k+1)$-chain with boundary in $\A$. Such a representative is unique up to addition of cycles in $\X$ and $(k+1)$-chains completely contained in $\A$. We denote the space of relative $(k+1)$-cycles by $Z_{k+1}(\X,\A)$.

 We also have a map of exact sequences induced by the inclusion $\A\hookrightarrow \A\cup \B$
\begin{equation*}
\begin{tikzcd}
 \arrow[r] & H_{*+1}(\X) \arrow[d] \arrow[r, "j_{*}^\A"] & H_{*+1}(\X,\A) \arrow[r, "\partial^\A"] \arrow[d] & H_*(\A) \arrow[d] \arrow[r, "\iota_*^\A"] & H_*(\X) \arrow[d]\arrow[r] &{}\\
 \arrow[r] &H_{*+1}(\X) \arrow[r, "j_{*}^{\A\cup \B}"] &H_{*+1}(\X,\A\cup \B) \arrow[r, "\partial^{\A\cup \B}"] &H_*(\A\cup \B) \arrow[r, "\iota_*^{\A\cup \B}"] &H_*(\X) \arrow[r] &{}
\end{tikzcd}
\end{equation*}
and a corresponding diagram for $\B$. In combination with \eqref{eq:ker_iso}, this shows that
the cokernel of $\Phi$ is isomorphic to the cokernel of 
$$H_{*+1}(\X,\A)/\Img{j_{*}^\A}\oplus H_{*+1}(\X,\B)/\Img{j_{*}^\B} \to H_{*+1}(\X,\A\cup \B)/\Img{j_{*}^{\A\cup \B}}. $$
This is again the same as the cokernel of 
\begin{equation}\label{eq:rel-cok}
H_{*+1}(\X,\A)\oplus H_{*+1}(\X,\B) \to H_{*+1}(\X,\A\cup \B), 
\end{equation}
since  $j_{*}^{\A\cup \B}$ filters as $j_{*}^{\A\cup \B}:H_{*+1}(\X) \xrightarrow{j_*^\A} H_{*+1}(\X,\A) \to H_{*+1}(\X,\A\cup \B)$ and hence $\Img{j_{*}^{\A\cup \B}}\subseteq \Img{H_{*+1}(\X,\A) \to H_{*+1}(\X,\A\cup \B)}$.

 Equation \eqref{eq:rel-cok} shows that a non-trivial class in the cokernel of $\Phi_k$ is represented by a $(k+1)$-chain with boundary in $\A \cup \B$ that cannot be written as a sum of chains with boundaries purely in $\A$ or $\B$. 
 In particular,
 the cells of the chain must form a $(k+1)$-surface with boundary in $\A\cup \B$ with at least one connected component intersecting both $\A$ and $\B$. See Figure \ref{fig:tunnels} for examples of what is counted or not counted as a tunnel.

\begin{figure}
    \centering
    \includegraphics[width=\linewidth]{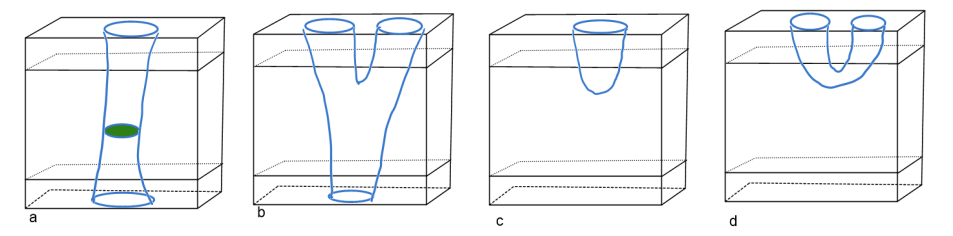}
    \caption{Examples of tunnels. The blue surface is $\X$, the part in the top slice is $\A$ and the part in the bottom slice is $\B$. In a), the tunnel is closed in the middle and hence not counted because it is a sum of two elements in $\Ker{\iota_*^\A}$ and $\Ker{\iota_*^\B}$. In b), there is 1 tunnel, and in c) and d), there are no tunnels since these are elements of $\Ker{\iota_*^\A}$.}
    \label{fig:tunnels}
\end{figure}

\subsection{More on the point cloud case} \label{sec:point_cloud}

For our application in chemistry, we consider a point cloud  $P\subseteq [0,1]^3$ representing the position of atoms. The union of closed balls of radius $r$ centered around the points in $P$ is denoted $P_r$.  Let $\X_r \subseteq P_r$ be the associated alpha-complex at fixed filtration value $r$, see e.g.\ \cite{edHar}. This is a  simplicial complex which deformation retracts onto $P_r$. To model the top and bottom of $\X_r$, we let  $A= [0,1]^2\times [1-\varepsilon,1] $ and $B=[0,1]^2\times [0,\varepsilon] $ be slices of $[0,1]^3$ at the top and bottom, respectively. 
The subcomplexes $\A_r$ and $\B_r$ are the simplicial complexes formed by all simplices of $\X_r$ with vertices in $P\cap A$ and $P\cap B$, respectively.  
A representative of a non-trivial element in $\Cok {\Phi_1}$ is a 2-dimensional simplicial surface in $\X_r$ whose boundary is the union of non-trivial 1-cycles in $\A_r $ and  $\B_r$. There must be a free passage from $A$ to $B$ through the surface, since otherwise, the element would be a sum of an element from  $H_{2}(\X_r,\A_r)$ and one from $ H_{2}(\X_r,\B_r)$ and thus trivial in $\Cok{ \Phi_1}$ by \eqref{eq:rel-cok}.  We think of this passage as a tunnel. A path through this passage can always be pushed to the complement of $P_r$, which means that it has distance at least $r$ to all points in $P$. Equivalently, one can roll a ball of radius $r$ along the path. Thus, we say that the tunnel has thickness at least  $r$.  By adding closed cycles and relative cycles with boundary in only one of $\A_r$ and $\B_r$, we can obtain an equivalent representative for the class in $\Cok{ \Phi_1}$ that bounds one or more tunnels connecting $\A_r$ and $\B_r$. On the other hand, the boundary of such a tunnel will be represented by an element of $\Cok{ \Phi_1}$.  

 In order for a cobordism from $\A_r$ to $\B_r$ to be non-trivial in $\Cok{\Phi_1}$, its boundaries must be non-trivial in $H_*(\A_r)$ and $H_*(\B_r)$. Typically, this will happen when a tunnel exits at the top and botton of the box, but degenerate cases may occur, see Figure \ref{fig:degenerate}. In fact, the definition of tunnels is quite sensitive to the choice of $\A_r$ and $\B_r$. Choosing the thickness $\varepsilon$ of the top and bottom slices is a non-trivial problem in practice. On the one hand, $\varepsilon $ must be large enough to contain the boundary of the tunnel. On the other hand, a very large $\varepsilon$ increases the risk of degenerate tunnels. 

 \begin{figure}
    \centering
    \includegraphics[width=\linewidth]{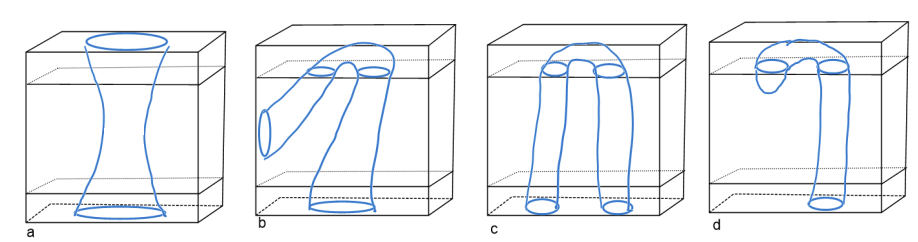}
    \caption{Examples of cobordisms in $\Cok{\Phi_1}$. The blue surface is $\X$, the part in the top slice is $\A$ and the part in the bottom slice is $\B$. The typical case is a) where the tunnel exits through the top and bottom. In both b) and c) we count 1 tunnel, while d) is not counted as a tunnel.  
    }
    \label{fig:degenerate}
\end{figure}

Since we are really trying to model the void space between atoms, a natural alternative approach would be to model the void space directly via the dual Voronoi complex. Indeed, recall the duality between the Voronoi and Delauney complex associated to a point cloud $P$, see e.g.\ \cite{edHar}. Let $\mathbb{V}_{-r}$ denote the cell complex consisting of all Voronoi cells whose dual Delauney cell is not in $\X_r$. Then the complement of $P_r$ deformation retracts onto $\mathbb{V}_{-r}$, see Appendix \ref{appendix} for a rigorous statement. Thus, it is natural to think of a tunnel as a path from top to bottom in $\mathbb{V}_{-r}$ (with a suitable definition of top and bottom in $\mathbb{V}_{-r}$). 
Such tunnels can be identified by computing $\Cok {\Phi_0}$ for $\mathbb{V}_{-r}$. However, since working with the alpha-complex is most common in topological data analysis, we use the alpha-complex formulation in this paper.

\section{$\Cok \Phi$ as persistence module} \label{sec:persistence} 

Assume that $\X$ is a filtered cell complex with a filter map $f$ that  to each cell $\sigma$  in $\X$ assigns a real number $f(\sigma)$ such that $\tau \subseteq \sigma$ implies $f(\tau) \leq f(\sigma)$. We assume for simplicity that $f$ is injective. The filter map defines a filtration of $\X$ into cell complexes $\X_r = f^{-1}((-\infty,r])$ with inclusion maps $h_{r,s}:\X_r \to \X_s$ for $r<s$ inducing maps on homology $(h_{r,s})_*:H_*(\X_r)\to H_*(\X_s)$. For ease of notation, we assume that the image of $f$ is $\{1,\ldots,N\}$. Since the topology only changes when a new cell is added to $\X_r$, all relevant information is contained in the following persistence module
$$H_*(\X_0) \xrightarrow{(h_{0,1})_*} H_*(\X_1) \xrightarrow{(h_{1,2})_*} \dotsm \xrightarrow{(h_{N-1,N})_*} H_*(\X_N).$$
By the injectivity assumption on $f$,  the dimension can only jump up or down by 1 at each step. In this case, we say that a birth or death happens, respectively. The filtration value when a birth or death happens is referred to as a birth or death time. A death time $d$ is paired with the birth time $b$ of the earliest born class that dies at time $d$. Unpaired birth times correspond to infinite cycles, i.e.\ classes that remain alive at time $N$.

 The filter map $f$ induces a filter map on $\A$, $\B$ and $\A \cup \B$, which means that the inclusion maps induce maps of persistence modules
\begin{equation*}
	\begin{tikzcd}
		H_*(\X_0) \arrow[r] & H_*(\X_1) \arrow[r ] &\dots \arrow[r] & H_{*}(\X_N) \\  
  H_*(\A_0) \arrow[r] \arrow[u]& H_*(\A_1) \arrow[u]\arrow[r ] &\dots \arrow[r] & H_{*}(\A_N)\arrow[u]
	\end{tikzcd}
\end{equation*}	
etc. As noted in \cite{kernel-persistence}, the kernels of the vertical maps again form a persistence module, which we refer to as $\Ker{ \iota_*^\A}$, with filtration $\Ker{ \iota_*^\A}_i = \Ker{\iota_*^{\A_i}: H_*(\A_i)\to H_*(\X_i)}$ for $i=1,\ldots,N$. Similarly, the images and cokernels of the vertical maps again form persistence modules. Extending this reasoning, we see that also $\Cok \Phi$ becomes a persistence module with filtration $\Cok \Phi_i$, $i=1,\ldots,N$. The birth and death times of $\Cok \Phi$ are identified in the next section.

\begin{example}
For the point cloud example in Section \ref{sec:point_cloud},  the Delauney complex is a filtered by alpha-filtration. The filtered complex at filtration value $r$ becomes exactly $\X_r$. If a tunnel is alive at filtration value $r$, it means the thickness of the tunnel is at least $r$. The death time of a tunnel corresponds to the largest radius of a ball that can roll through the tunnel without touching the original point cloud $P$. We call this the bottleneck thickness. The birth time of a tunnel is the first time it becomes disconnected from an existing tunnel or from the unbounded component.
\end{example}
    
\subsection{Births and deaths in $\Cok \Phi$}

The theorem below describes how the birth and death times in $\Cok \Phi $ are completely determined by those in $\Ker{\iota^{\A}_{\ast}}$, $\Ker{\iota^{\B}_{\ast}}$ and $\Ker{\iota^{\A\cup \B}_{\ast}}$. Illustrations of the different cases are given in Figure \ref{fig:cases}. 
The theorem does not give any information on how births and deaths are paired. An algorithm for pairing birth and death times is given in Section~\ref{sec:alg}.

\begin{figure}
    \centering
    \includegraphics[width=\linewidth]{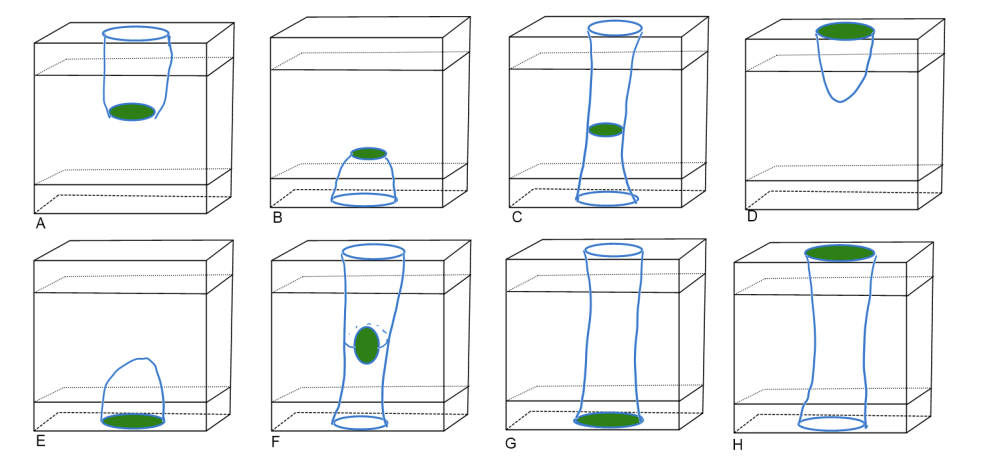}
    \caption{The 8 cases in Table \ref{tab:birth-death-events}. The cell that is inserted when a birth or death happens is shown in green. }
    \label{fig:cases}
\end{figure}
    
    \begin{theorem}\label{thm1}
 A class in $\Cok{\Phi}$ is born when a class in $\Ker{\iota^{\A\cup\B}_{\ast}}$ is born and there is no class born in $\Ker{\iota^{\A}_{\ast}}$ and $\Ker{\iota^{\B}_{\ast}}$ (Case F in \Cref{tab:birth-death-events}). A class in $\Cok{\Phi}$ dies when a class is born in $\Ker{\iota^{\A}_{\ast}}$ and $\Ker{\iota^{\B}_{\ast}}$, and hence in $\Ker{\iota^{\A\cup\B}_{\ast}}$ (Case C in \Cref{tab:birth-death-events}), or when a class is born in precisely one of $\Ker{\iota^{\A}_{\ast}}$ and $\Ker{\iota^{\B}_{\ast}}$, but not in $\Ker{\iota^{\A\cup\B}_{\ast}}$ (Case G and H in \Cref{tab:birth-death-events}).
    \end{theorem}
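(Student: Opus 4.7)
The key initial observation is that $\Phi$ is an inclusion. By \eqref{eq:sum}, $H_*(\A\cup\B)=H_*(\A)\oplus H_*(\B)$, so $\iota^{\A\cup\B}_*(a,b)=\iota^\A_*(a)+\iota^\B_*(b)$, and hence $\Ker{\iota^\A_*}\oplus\Ker{\iota^\B_*}\subseteq\Ker{\iota^{\A\cup\B}_*}$. Thus $\Phi$ is injective at every filtration step, yielding a short exact sequence of persistence modules
\begin{equation*}
0\to\Ker{\iota^\A_*}\oplus\Ker{\iota^\B_*}\to\Ker{\iota^{\A\cup\B}_*}\to\Cok\Phi\to 0,
\end{equation*}
and the identity $\dim\Cok\Phi_i=\dim\Ker{\iota^{\A\cup\B}_*}_i-\dim\Ker{\iota^\A_*}_i-\dim\Ker{\iota^\B_*}_i$. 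Births and deaths in $\Cok\Phi$ therefore correspond to $+1$ and $-1$ jumps of the right-hand side.

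The next step is to notice that deaths in the three kernel modules cancel. A death in $\Ker{\iota^\A_*}$ occurs only when the added cell $\sigma\in\A$ kills some $\alpha\in\Ker{\iota^\A_*}_i$, i.e.\ $[\partial\sigma]_\A=\alpha$ and $\iota^\A_*(\alpha)=0$ in $H_*(\X_i)$; the same cell then kills $(\alpha,0)\in\Ker{\iota^{\A\cup\B}_*}_i$, so the two deaths cancel in the dimension identity. Symmetric considerations apply to $\B$. Moreover, a single cell cannot produce a birth in one kernel and a death in another (since either $\sigma\in\A$, $\sigma\in\B$, or $\sigma\notin\A\cup\B$, and each case permits only one type of event in each kernel). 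Therefore
\begin{equation*}
\Delta\dim\Cok\Phi=b^{\A\cup\B}-b^\A-b^\B,
\end{equation*}
where $b^X\in\{0,1\}$ records whether $\Ker{\iota^X_*}$ has a birth at the step.

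The eight cases of \Cref{tab:birth-death-events} enumerate the possible triples $(b^\A,b^\B,b^{\A\cup\B})$. The triple $(1,1,0)$ is impossible: $\Img{\iota^\A_*}\subseteq\Img{\iota^{\A\cup\B}_*}$ forces a birth in $\Ker{\iota^{\A\cup\B}_*}$ whenever $\sigma\notin\A\cup\B$ produces a birth in $\Ker{\iota^\A_*}$, while the alternative source ($\sigma\in\B$) precludes a simultaneous birth in $\Ker{\iota^\B_*}$. Direct substitution then gives $\Delta=+1$ only for $(0,0,1)$, which is Case F, yielding a birth in $\Cok\Phi$; and $\Delta=-1$ only for $(1,1,1)$, $(1,0,0)$, and $(0,1,0)$, which are Cases C, G, and H respectively, yielding deaths. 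All remaining triples give $\Delta=0$.

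The one remaining subtlety is confirming that each $\Delta=\pm 1$ reflects a genuine single birth or death rather than two cancelling events; since only one cell is added per step, each kernel undergoes at most one event, and in each of Cases F, C, G, H the induced map $\Cok\Phi_i\to\Cok\Phi_{i+1}$ is seen directly to be injective with one-dimensional cokernel (a birth) or surjective with one-dimensional kernel (a death). The bulk of the remaining work is the careful bookkeeping through the eight cases, but no deeper obstacle arises.
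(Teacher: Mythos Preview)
Your dimension-counting argument is sound and reaches the same conclusion as the paper, but the route is different enough to be worth contrasting. The paper proceeds by explicitly tracking which classes are born or killed in $\Cok\Phi$: for each possible location of the newly added cell $\Delta$ (in $\A$, in $\B$, or in neither) it constructs the relevant relative cycles and shows directly how the event in $\Cok\Phi$ arises. Your approach instead exploits the short exact sequence to reduce everything to the scalar identity $\Delta\dim\Cok\Phi = b^{\A\cup\B}-b^\A-b^\B$, after which the table is a triviality. Your route is slicker for the bare statement of the theorem; the paper's constructive argument pays off later, since the explicit cycles it produces feed directly into the representative-tracking in Algorithm~\ref{alg:pair} and the proof of Theorem~\ref{thm2}.

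Two small points deserve tightening. First, your cancellation of deaths argues only one direction: you show that a death in $\Ker{\iota^\A_*}$ or $\Ker{\iota^\B_*}$ forces one in $\Ker{\iota^{\A\cup\B}_*}$, but you never rule out a standalone death in $\Ker{\iota^{\A\cup\B}_*}$. The paper handles this explicitly by noting that any class dying in $H_*(\A)\oplus H_*(\B)$ must lie in one summand; equivalently, if $\sigma\in\A$ kills $[\partial\sigma]_{\A\cup\B}\in\Ker{\iota^{\A\cup\B}_*}$ then $[\partial\sigma]_\A\in\Ker{\iota^\A_*}$, so $d^{\A\cup\B}=1$ forces $d^\A=1$ or $d^\B=1$. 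Second, your parenthetical justification that a single cell cannot mix a birth in one kernel with a death in another (``each case permits only one type of event in each kernel'') does not by itself exclude, say, $\sigma\in\A$ causing a death in $\Ker{\iota^\A_*}$ together with a birth in $\Ker{\iota^\B_*}$. The clean reason is that any kernel death requires $[\partial\sigma]_{\X_{b-1}}=0$ while any kernel birth requires $[\partial\sigma]_{\X_{b-1}}\neq 0$, and these are incompatible. Both gaps are one-line fixes, and once filled your argument is complete.
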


For the proof, we recall from \cite{kernel-persistence} how births and deaths happen in $\Ker{\iota^{\A}_{k}}$. Say a class $[c]\in \Ker{\iota^{\A}_{k}}_b$ is born at time $b$. The representative $c$ must be a non-trivial $k$-cycle in $\A_{b-1}$ which becomes a boundary in $\X_b$ when adding a ($k+1$)-cell  $\Delta\in \X_b\backslash \A_b$ of filtration value $b$. That is, there is a $(k+1)$-dimensional chain $c_1$ in $\X_{b-1}$ with $\partial (c_1+\Delta)= c$. 
		Moreover, no $(k+1)$-chain $c_2$ in $\X_{b-1}$ with $\partial c_2 = c$ existed before adding $\Delta$ (because then $[c]$ would have been born earlier). Note that $\Delta$ cannot lie in $\A_b$, since otherwise $c+\partial \Delta = \partial c_1 $ and hence $[c+\partial \Delta] \in \Ker{\iota^{\A}_{\ast}}_{b-1}$ before adding $\Delta$. Hence, after adding $\Delta$, $[c]=[c+\partial \Delta]$ is not new in $\Ker{\iota^{\A}_{\ast}}_b$. The only way a class $[c]\in \Ker{\iota^{\A}_{\ast}} \subseteq H_\ast(\A)$ can die is by being killed in $H_\ast(\A)$.

    \begin{proof}
        We will argue, that there are only the possible birth-death combinations shown in \Cref{tab:birth-death-events}. First of all, note that $\Phi_k$ is injective by \eqref{eq:sum}. Thus, a birth or death in $\Cok{\Phi_k}$ can only happen when the dimension of the domain or codomain of $\Phi_k$ changes, i.e. when a birth or death happens there. Moreover, the assumption that $f$ is injective ensures that at most one birth or death can happen at a given time in each of $\Ker{\iota^{\A}_{\ast}}$, $\Ker{\iota^{\B}_{\ast}}$, and $\Ker{\iota_{\A\cup\B}^{\ast}}$.

		Suppose a class $[c]\in \Ker{\iota^{\A}_{\ast}}_b$ is born at time $b$. 
		This must happen by adding a $(k+1)$-cell $\Delta\in \X_b\backslash \A_b$. There are three possibilities:
  
		\begin{itemize}		
		 \item[(i)] If $\Delta\in \X_b\backslash (\A_b\cup \B_b)$, this implies a birth of $[c]$ in  $\Ker{\iota^{\A\cup\B}_{\ast}}_b$.  If there is no simultaneous birth in $\Ker{\iota^{\B}_{\ast}}_b$, this is Case A.
		
		\item[(ii)]	If  $\Delta\in \X_b\backslash (\A_b\cup \B_b)$ also leads to a birth of  $[c']\in \Ker{\iota^{\B}_{\ast}}_b$, then there must exist $(k+1)$-chains $c_1$ and $c_1'$ in $\X_{b-1}$ with $c= \partial(c_1 + \Delta)$ and $c'= \partial(c_1' + \Delta)$. Thus, $[c+c'] = [\partial(c_1 + c_1')]$ must be a  non-trivial element in $\Ker{\iota^{\A\cup\B}_{\ast}}_{b-1}$ so that there is only one birth in $\Cok{\Phi_k}_b$. This is Case C.		
				
		\item[(iii)] If $\Delta\in \B_b$, there exists a $(k+1)$-chain $c_1$ in $\X_{b-1}$  such that $\partial (c_1 + \Delta) = c$, that is, $[c + \partial \Delta] \in \Ker{\iota^{\A\cup\B}_{\ast}}_{b-1}$. This represents a non-trivial element of  $\Cok{\Phi_k}_{b-1}$ because $[c] $ is not yet born in $ \Ker{\iota^{\A}_{\ast}}_{b-1}$. When adding $\Delta$, $[c]$ is born in $ \Ker{\iota^{\A}_{\ast}}_{b}$, while $[\partial \Delta]=0$ in $ \Ker{\iota^{\B}_{\ast}}_{b}$.  Hence $[c+\partial \Delta]$ is in the image of $\Phi_k$ and hence killed by $\Delta$ in  $\Cok{\Phi_k}_b$. This is Case G. 
  		\end{itemize}

		 A death in $\Ker{\iota^{\A}_{\ast}}_d$ implies a death in $\Ker{\iota^{\A\cup\B}_{\ast}}_d$ by injectivity of $\Phi_k$. This is Case D. Injectivity of $f$ and disjointness of $\A $ and $\B$ implies that a death cannot happen simultaneously in $\Ker{\iota^{\A}_{\ast}}_d$ and $\Ker{\iota^{\B}_{\ast}}_d$.
		
		Finally, a class can be born in $\Ker{\iota^{\A\cup\B}_{\ast}}_b$  without being the image of $\Phi_k$ as in Case F. However, a death in $\Ker{\iota^{\A\cup\B}_{\ast}}_d$ cannot happen without a death in $\Ker{\iota^{\A}_{\ast}}_d$ or $\Ker{\iota^{\B}_{\ast}}_d$. Indeed, a death of  $[c]\in\Ker{\iota^{\A\cup\B}_{\ast}}_d$ would mean that $[c]$ becomes trivial in  $H_{\ast}(\A_d)\oplus H_{\ast}(\B_d)$ when inserting $\Delta$. Since $\A_d$ and $\B_d$ are disjoint, this  would require $[c]$ to be in one of the summands. Hence $[c] $ would also die in $\Ker{\iota^{\A}_{\ast}}_d$ or $ \Ker{\iota^{\B}_{\ast}}_d$. 
		
		The remaining cases B, E and H 
         are found as above by replacing $\Ker{\iota^{\A}_{\ast}}$ by $\Ker{\iota^{\B}_{\ast}}$.
        
        \begin{table}
            \centering
            			\begin{tabular}{|r|c|c|c|c|c|c|c|c|}\hline
				Case: & A & B & C & D & E & F & G & H\\ \hline
				$\Ker{\iota^{\A}_{\ast}}$ & b & - & b & d & - & - & b & -\\ \hline
				$\Ker{\iota^{\B}_{\ast}}$ & - & b & b & - & d & - & - & b\\ \hline
				$\Ker{\iota^{\A\cup\B}_{\ast}}$ & b & b & b & d & d & b & - & -\\ \hline
				$\Cok{\Phi}$ & - & - & d & - & - & b & d & d\\ \hline
			\end{tabular}
            
            \caption{Possible combinations of birth-death events.
              }\label{tab:birth-death-events}
        \end{table}

    \end{proof}

\section{Algorithms}\label{sec:alg}

\subsection{The algorithm for finding kernel persistence}\label{sec:ker}

We first recall the algorithm for computing kernel persistence from \cite{kernel-persistence}. This can be used for finding births and deaths  in $\Cok \Phi$ using Table \ref{tab:birth-death-events}, and it will be the foundation for and share many similarities with the algorithm for pairing births and deaths in $\Cok \Phi$ in Section \ref{sec:pairing}. The precise algorithm is given in Algorithm \ref{alg:kernel}. The algorithm makes use of the standard column reduction algorithm that uses column addition from left to right to push the lowest 1 of each column as far up as possible, see \cite[Sec. VII.1]{edHar} for details.

\begin{algorithm}

\KwData{The boundary matrix $D^{\X}$ of $\X$ with the rows and columns ordered by $f$.}
\KwResult{Pairs of birth and death times in $\Ker{\iota^\A_*} $.}

Obtain a new matrix $D_{im}^{\A}$ by reordering the rows of $D^{\X}$, but not the columns, such that the cells in $\A$ come first followed by cells in $\X\setminus\A$, and rows ordered by filtration value within each block. 

Run the standard column reduction algorithm  on $D_{im}^{\A}$ to obtain matrices $R_{im}^{\A}$ and $V_{im}^{\A}$ such that
    $R_{im}^{\A} = D_{im}^{\A} V_{im}^{\A}$.
    
 The column of a cell $\tau \notin \A$ with lowest 1 in the $\A$-block of  $R_{im}^{\A}$ represents a birth in $\Ker{\iota^\A_*} $. The corresponding column in $V_{im}^{\A}$ stores a representative of the relative cycle born.
 
 Construct a new matrix $D_{ker}^{\A}$ by removing the columns in $V_{im}^{\A}$ that do not store cycles.
 
Reorder the rows of $D_{ker}^{\A}$ so that the cells in $\A$ come first, followed by cells in $\X\setminus\A$, both ordered by $f$.

Reduce $D_{ker}^{\A}$ to obtain $R_{ker}^{\A}$, $R_{ker}^{\A} = D_{ker}^{\A} V_{ker}^{\A}$.

 The column of $\tau$ in $R_{ker}^{\A}$ represents a death if $\tau \in \A$ and its lowest 1 cannot be pushed to $\A$. The row of the lowest 1 gives the corresponding  birth time.
 \caption{Algorithm for kernel persistence}\label{alg:kernel}
\end{algorithm}

Using the relative homology formulation, it is easy to see what the reduction of $D_{Im}^\A$ does  (note the analogy with the algorithm for finding relative homology in \cite{edHar}): By ordering the rows such that cells in $A$ come first, the reduction algorithm always tries to push the lowest 1 to $\A$ if possible. Thus, in the column of a $(k+1)$-cell $\tau$ we try to add $(k+1)$-cells with lower filtration value to $\tau $ to form a $(k+1)$-chain with boundary in $\A$. This is possible if the column of  $\tau$  in $R_{Im}^\A$ is zero (in this case $\tau $ creates a cycle in $\X$) or has a lowest 1 in $\A$ (in this case $\tau$ creates a relative cycle with non-trivial boundary in $\A$). This relative cycle is non-trivial in $H_{k+1}(\X,\A)/\Img{j^\A_{k+1}}$ if $\tau \notin \A$ (if $\tau \in \A$ the whole relative cycle would lie in $\A$) and  the $\tau$-column in $R_{Im}^\A$ is non-zero (such that it is not a closed cycle). Thus, $\tau $ gives birth to a new element of $H_{k+1}(\X,\A)/\Img{j^\A_{k+1}}$ when $\tau\notin \A$ and it has a lowest 1 in the $\A$-block of $R_{Im}^\A$. 

A death of a class in $\Ker{\iota^\A_*} \subseteq H_*(\A)$ can only happen if the class becomes zero in $H_*(\A)$. So it must happen by adding a cell $\tau \in \A$ that kills a cycle in $\A$. This cycle should represent something in $\Ker{\iota^\A_*}$, i.e. it must be the boundary of a relative cycle. Adding $\tau$ closes this relative cycle. 
To pair births and deaths, we use $D_{Ker}^\A$. The columns represent cycles in $\X$, and a column corresponding to $\tau \in \A$ represents a death if its lowest 1 cannot be pushed to $\A$, that is, $\tau$ closes a non-trivial relative cycle. The  reduction of $D_{Ker}^\A$ tries to find equivalent representatives such that the largest filtration value of the cells in $\X - \A$ is as small as possible, i.e.\ the earliest born relative cycle that is killed by $\tau$.

\subsection{Algorithm for pairing births and deaths in $\Cok \Phi$}\label{sec:pairing}

Algorithm \ref{alg:pair} gives an algorithm for pairing birth and death times in $\Cok\Phi$. The algorithm uses the notation $M[\tau]$ for the column  indexed by $\tau$ in a matrix $M$.

\begin{algorithm}
\KwData{Birth and death times in $\Ker{\iota^\A_*}$, $\Ker{\iota^\B_*}$ and $\Ker{\iota^{\A\cup \B}_*}$ and matrices $R^{\A}_{Im}=D^\A_{Im} V^\A_{Im}$ and  $R^\B_{Im}=D^\B_{Im} V^\B_{Im}$ computed using Algorithm \ref{alg:kernel}.}
\KwResult{Birth and death pairs in $\Cok \Phi$.}

Reorder the rows of $V^\A_{Im}$, and $V^\B_{Im}$ consistently such that $\A\cup \B$ comes before $\X\backslash (\A\cup \B)$ and the rows in the $\X\backslash (\A\cup \B)$ block are ordered by filtration value.

Define a new matrix $D^\Phi$ as the columns of $V^\A_{Im}$ corresponding to zero-columns in $R^\A_{Im}$.\label{(i)}

\For{$\tau \in \X$}{\label{for}
    \If{$R^{\A}_{Im}[\tau] $ is non-zero and  its lowest 1 is in $\A$}{insert $V^\A_{Im}[\tau]$ into $D^\Phi$. \label{(ii)}}
 
    \If{$R^{\B}_{Im}[\tau]$ is non-zero and  its lowest 1 is in $\B$}{\label{(iii)_start}

        \eIf{$D^\Phi$ has no column corresponding to $\tau$}{insert $V^\B_{Im}[\tau]$ into $D^\Phi$. \label{(iii)_1}} 
        {\eIf{$\tau\in \A$}{insert $V^\B_{Im}[\tau]$ into $D^\Phi$ after the existing $\tau$-column. \label{(iii)_2}}
        {insert $V^\B_{Im}[\tau]$ into $D^\Phi$ before the existing $\tau$-column. \label{(iii)_3}}}
    }
}

Run the standard column reduction algorithm on $D^\Phi$ to form $R^\Phi=D^\Phi V^\Phi$.

Double columns in $D^\Phi$ correspond to a death in $\Cok \Phi$.  
The row with the lowest 1 in the second column gives the corresponding birth time.

Compute the birth and death times in $\Cok{\Phi}$ from \Cref{{tab:birth-death-events}}.

If there are birth times in $\Cok \Phi$ that are not matched to a death by the algorithm, it corresponds to an infinite cycle. 

    \caption{Algorithm for pairing birth and death times}\label{alg:pair}
\end{algorithm}

\begin{theorem}\label{thm2}
Algorithm \ref{alg:pair} correctly pairs birth and death times in $\Cok \Phi$. 
\end{theorem}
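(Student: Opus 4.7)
I would prove Theorem~\ref{thm2} in three steps, working throughout with the relative-homology formulation from \eqref{eq:rel-cok}.

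First, interpret each column of $D^\Phi$ as a $(k+1)$-chain in $\X$ whose boundary lies in $\A\cup\B$. Zero columns of $R^\A_{\text{Im}}$ give cycles of $\X$, hence vanish in the quotient $H_{*+1}(\X,\A\cup\B)/\Img{j^{\A\cup\B}_*}$; columns $V^\A_{\text{Im}}[\tau]$ with $R^\A_{\text{Im}}[\tau]$ having lowest $1$ in $\A$ are chains with boundary in $\A$ and represent elements of the image of $H_{*+1}(\X,\A)\to H_{*+1}(\X,\A\cup\B)$; the $\B$-columns are analogous. Jointly, the columns inserted by time $\tau$ span the image of $\Phi$ inside $\Ker{\iota^{\A\cup\B}_*}$ at filtration value $\tau$, modulo the trivial contribution from cycles of $\X$. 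Using Theorem~\ref{thm1} and Table~\ref{tab:birth-death-events}, I would then verify case by case that a double column at $\tau$ appears exactly in the death cases of $\Cok\Phi$ (C, G, H). Case C follows since $\tau\notin\A\cup\B$ produces simultaneous kernel births. For Cases G and H, the cell $\tau$ lies in $\B$ or $\A$ respectively, so all entries of its column in the corresponding $D_{\text{Im}}$ are in that subcomplex; since $\tau$ is also a negative cell in $\X$'s persistence --- it causes the kernel birth in the other subcomplex --- that column is non-zero, forcing the second $\tau$-column to appear. In the remaining cases A, B, D, E, F, disjointness of $\A$ and $\B$ rules out one of the two insertion conditions.

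Second, I would show that the standard column reduction of $D^\Phi$, with rows ordered so that $\A\cup\B$ precedes $\X\setminus(\A\cup\B)$, is equivalent to running the persistence pairing algorithm on a filtered generating set of $\Img\Phi$ viewed inside $\Ker{\iota^{\A\cup\B}_*}$. The delicate insertion rule --- $V^\A_{\text{Im}}[\tau]$ before $V^\B_{\text{Im}}[\tau]$ when $\tau\in\A$, otherwise after --- is chosen so that the first of the two $\tau$-columns represents the image-generator born at $\tau$, while the second represents the new image-relation that kills an earlier born element of $\Cok\Phi$. The standard pairing argument then shows that the row of the lowest $1$ of the reduced second column points to the cell responsible for the earliest Case F birth in $\Cok\Phi$ still alive at $\tau$, which is exactly the class to be paired with the death at $\tau$. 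Unpaired Case F births survive to the final complex and correspond to infinite cycles.

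The main obstacle is the second step. Ordinary persistence reduction pairs births and deaths within one persistence module, whereas here the columns of $D^\Phi$ represent elements of $\Img\Phi$ and we need to read off the pairing of the quotient $\Cok\Phi$. Making this rigorous requires an analog of the classical pairing lemma tailored to cokernel persistence, together with careful verification that the row of the lowest $1$ of the reduced second $\tau$-column always identifies the correct Case F birth cell. The row reordering and the asymmetric insertion rule are precisely the bookkeeping devices that make this work; checking their correctness across every possible interaction between double columns and earlier births is the core of the proof.
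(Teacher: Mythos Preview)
Your proposal is essentially correct and follows the same architecture as the paper's proof: interpret the columns of $D^\Phi$ via the relative-homology formulation \eqref{eq:rel-cok}, check case by case that double $\tau$-columns occur precisely in Cases C, G, H of Table~\ref{tab:birth-death-events}, and then argue that column reduction on $D^\Phi$ with the given row order picks out the earliest-born representative of the dying class. The paper carries out your ``first step'' with the same case split (it lists ten sub-cases rather than folding them into Theorem~\ref{thm1}, but the content is identical), and it resolves your ``main obstacle'' in the final paragraph by a direct chain-level argument rather than the more abstract cokernel-pairing lemma you outline.

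Two small points where your framing differs from the paper and could use sharpening. First, the columns of $D^\Phi$ form a basis for the chain-level space $Z_{*+1}(\X,\A)+Z_{*+1}(\X,\B)$, not literally for $\Img\Phi$; the paper exploits this directly, arguing that since a representative of the dying class is unique up to addition of relative cycles with boundary purely in $\A$ or purely in $\B$, and since these are exactly the single columns to the left of $\tau$, reduction automatically searches over all equivalent representatives. This sidesteps the need for a separate ``pairing lemma for cokernel persistence''. Second, in Case C both $\tau$-columns are newborn generators; neither is a ``relation'' on its own, and it is their \emph{sum} that represents the dying element of $\Cok\Phi$. Your insertion-rule interpretation (``first is generator, second is relation'') is accurate for Cases G and H but should be rephrased for Case C; operationally nothing changes, since the reduction immediately adds the first $\tau$-column to the second and thereafter works with the sum.
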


\begin{proof}
	In the following, when we think of representatives for kernels, we use the relative homology formulation \eqref{eq:ker_iso}. E.g.\ we think of an element of $\Ker{\iota_*^\A}$ as represented by a chain with boundary in $\A$, i.e.\ an element of the space of relative cycles $Z_{*+1}(\X,\A)$.  	
    
    The column of some $\tau$ added to $D^\Phi$ in Line \ref{(i)} of the algorithm corresponds to a cycle in $\X$ that $\tau$ gives birth to. Thus, the columns added in Line \ref{(i)} correspond to representatives of a basis for the space of cycles in $\X$ (such a basis  could  have been identified from any of the matrices $R^\X$, $R^\A_{Im}$, or $R^\B_{Im}$, but the specific basis may vary). A basis for the relative cycles with boundary in $\A$  is given by the columns  added in Line \ref{(i)}  and \ref{(ii)}. Similarly, a basis for the relative cycles with boundary in $\B$ is given by the columns added in Line \ref{(i)}  and \ref{(iii)_1} -- \ref{(iii)_3}.

    The column of some $\tau$ added in Line \ref{(ii)} will represent a birth in $\Ker{\iota_*^\A}$ if $\tau \notin \A$. Indeed, as explained in Section \ref{sec:ker}, a birth happens in $\Ker{\iota_*^\A}$ when adding a cell $\tau \notin \A$, whose column in $R^\A_{Im}$  is non-zero and has its lowest 1  in one of the $\A$-rows. If additionally $\tau \notin \A\cup \B$, then it will represent a birth in $\Ker{\iota_*^{\A\cup \B}}$ by the same argument. (Specifically, to check that its lowest 1 in $R^{\A\cup \B}_{Im}$ must be in the $\A\cup \B$-block, we note that if it is possible to push the lowest 1 of a column to $\A$ in  $R_{Im}^\A$, then it also possible to push it to $\A\cup \B$ in $R_{Im}^{\A\cup \B}$.) The chain stored in the column of $\tau$ in $V_{Im}^\A$ is a cycle in $Z_{*+1}(\X,\A)$ born by $\tau$.
	
	Similarly, the column of some $\tau$ added in Line \ref{(iii)_1} -- \ref{(iii)_3} will represent a birth in $\Ker{\iota_*^\B}$ if $\tau \notin \B$ and in $\Ker{\iota_*^{\A\cup \B}}$ if $\tau \notin \A\cup \B$. By Theorem \ref{thm1}, a death in $\Cok \Phi$ can only happen when a birth takes place in at least one of $\Ker{\iota_*^\A}$ and $\Ker{\iota_*^\B}$, i.e.\ when a column is added somewhere in Line \ref{(ii)}--\ref{(iii)_3}. It is now straightforward to check which cases can occur: 
	\begin{itemize}
		\item[1.] If a $\tau $ column is added in Line \ref{(i)}, then it creates a cycle in $\X$. This does not change $\Cok \Phi$. A $\tau$ column cannot be added in any of the following lines, because the zero-columns of $R^\A_{Im}$, and $R^\B_{Im}$ are the same.
		\item[2.] If $\tau \notin \A\cup \B$ and a $\tau$ column is added in both Line \ref{(ii)} and in Line \ref{(iii)_3}, then we are in Case C. The two $\tau $ columns will represent newborn elements of $\Ker{\iota_*^\A}$ and $  \Ker{\iota_*^\B}$, respectively, while their sum will represent the dying element in $\Cok\Phi$.
		\item[3.] If $\tau \notin \A\cup \B$ and the $\tau$ column is added only in Line \ref{(ii)}, then we are in Case A.
		\item[4.] If $\tau \notin \A\cup \B$ and the $\tau$ column is added only in Line \ref{(iii)_1}, then we are in Case B.
		\item[5.] If $\tau \in \A$ and the $\tau$ column is added in both Line \ref{(ii)} and \ref{(iii)_2}, then we are in Case H.  
        The column added in  Line \ref{(iii)_2} will represent a birth in $ \Ker{\iota_*^\B}$, while the column added in Line \ref{(ii)} will be a chain in $\A$ containing $\tau$, which is zero in $ \Ker{\iota_*^\A}$. The sum of the two columns will represent the dying element of $\Cok \Phi$.
		\item[6.] If $\tau \in \A$ and the $\tau$ column is added only in Line \ref{(ii)}, then no birth happens and we are not in any of the cases.
        \item[7.] The case $\tau \in \A$ and $\tau $ is only added in Line \ref{(iii)_2} cannot happen because then $\partial \tau$ would be trivial and hence $\tau$ would create a cycle and would only have been added in Line \ref{(i)}.
		\item[8.] If $\tau \in \B$ and the $\tau$ column is added in both Line \ref{(ii)} and Line \ref{(iii)_3}, then we are in Case G.  
        The column added in  Line \ref{(ii)} will represent something in  $ \Ker{\iota_*^\A}$, while the column added in Line \ref{(ii)} will be a chain in $\B$ including $\tau$. The sum of the two columns will represent the dying element of $\Cok \Phi$.
		\item[9.] If $\tau \in \B$ and the $\tau$ column is added only in  Line \ref{(iii)_1}, then no birth happens and we are not in any of the cases.
        \item[10.] Again, the case $\tau \in \B$ and $\tau $ is only added in Line \ref{(ii)} does not happen.
	\end{itemize}
	To summarize, it is exactly the $\tau $ that appear twice that correspond to Case C, G and H, where a death happens in $\Cok\Phi$. The sum of the two columns stores a representative of the element in $\Cok \Phi$ that dies. It follows from \eqref{eq:rel-cok} that such a representative is unique up to addition of relative cycles with boundary purely in either $\A$ or $\B$. 
	
	A basis for the relative cycles with boundary purely in $\A$  is added in Line \ref{(i)} + \ref{(ii)}. A basis for the relative cycles with boundary purely in $\B$  is added in Line \ref{(i)} + \ref{(iii)_1}--\ref{(iii)_3}.  Thus, by adding the columns to the left of $\tau $ we can get all equivalent representations of the tunnel that dies at time  $\tau$. The reduction algorithm tries to push the lowest 1 as far up as possible finding the earliest born representation.

\end{proof}

\subsection{Discussion of representatives}

Finding representatives of cobordism equivalence classes is relevant in chemistry, where such representatives carry information about the atomic structure around tunnels.
Theorem \ref{thm2} tells us, which cells give birth and death to a cobordism. For the point cloud in Section \ref{sec:point_cloud}, the birth cell is the one that first seperates the tunnel from an existing tunnel or from the unbounded component. The death cell is the one that closes the tunnel off at its most narrow passage. Its filtration value is the bottleneck width of the tunnel. From Algorithm \ref{alg:pair} we further get the following representations of cobordisms.

\begin{corollary}
Algorithm \ref{alg:pair} provides the following representatives of cobordisms:
\begin{itemize}
 \item[1.]  If a cobordism is killed by $\tau$, the sum of the two $\tau$-columns in $D^\Phi$ provides a representative of the cobordism that is alive just before death. 
	\item[2.]  If a cobordism is killed by $\tau$, the second $\tau$-column in $R^\Phi$ gives a representative of the cobordism when it was born. 
\item[3.] A representative for a cobordism that lives forever can be read off from the birth column in $V_{im}^{\A\cup \B}$.
\end{itemize}
\end{corollary}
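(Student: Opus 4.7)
The plan is to derive all three claims directly from the case analysis inside the proof of Theorem \ref{thm2}, combined with the standard behaviour of the column reduction algorithm, so essentially no new technical machinery is required.

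For item 1, the proof of Theorem \ref{thm2} already established, through items 2, 5, and 8 of its bullet list, that in every situation where a death occurs in $\Cok\Phi$ (namely Cases C, G, and H of Table \ref{tab:birth-death-events}), the cell $\tau$ causing the death is inserted twice into $D^\Phi$, and that the sum of the two $\tau$-columns stores a chain whose class in $\Cok\Phi$ is precisely the dying element. Item 1 is therefore essentially a repackaging of that observation, and I would simply refer back to the relevant cases. For item 3, an immortal class in $\Cok\Phi$ must originate as a birth in $\Ker{\iota^{\A\cup \B}_*}$ that is never matched by a subsequent C/G/H event, i.e.\ a pure Case F. By the description of Algorithm \ref{alg:kernel} in Section \ref{sec:ker} applied to the subcomplex $\A\cup \B$, such a birth is witnessed by a cell $\tau$ whose column in $R^{\A\cup \B}_{Im}$ has its lowest $1$ in the $\A\cup \B$-block, and the corresponding column of $V^{\A\cup \B}_{Im}$ records a relative cycle with boundary in $\A\cup \B$ representing this class; passing to $\Cok\Phi$ yields the desired representative.

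Item 2 is the substantive step. I would start from the final paragraph of the proof of Theorem \ref{thm2}, which already notes that the columns inserted strictly to the left of $\tau$ in $D^\Phi$ form, via Lines \ref{(i)}+\ref{(ii)}, a basis for the relative cycles with boundary purely in $\A$, and via Lines \ref{(i)}+\ref{(iii)_1}--\ref{(iii)_3}, a basis for the relative cycles with boundary purely in $\B$. By the cokernel description \eqref{eq:rel-cok}, adding any combination of these columns to $D^\Phi[\tau]$ (either $\tau$-column) leaves the class in $\Cok\Phi$ unchanged. The second $\tau$-column of $R^\Phi$ is built exactly by such left-to-right additions, so it still represents the dying cobordism. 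The standard optimality property of column reduction — that the lowest $1$ is pushed as high as possible among all combinations of $D^\Phi[\tau]$ with the columns to its left — then identifies the filtration value of this lowest $1$ as the earliest time at which a representative of the class exists, which is by the birth convention of Section \ref{sec:persistence} the birth time of the cobordism.

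The main obstacle is precisely this last step: making rigorous that the reduction of $D^\Phi$ finds the genuine birth time and not just some earlier equivalent chain. The key observation is that during the reduction of $R^\Phi[\tau]$ only columns strictly to the left of $\tau$ are available, and those are exactly a basis for the subspace modulo which the cokernel class is defined, so the reduction explores precisely the permitted equivalence class and nothing outside it. Once this is spelled out, the usual optimality of column reduction gives that the row index of the lowest $1$ is minimal across that equivalence class, and the resulting column is a cocycle representative of the cobordism valid at its birth time.
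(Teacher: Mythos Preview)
Your proposal is correct and follows essentially the same route as the paper: item~1 is cited from Theorem~\ref{thm2} (exactly as the paper does), item~3 is argued via Case~F and Algorithm~\ref{alg:kernel} (the paper in fact omits a proof of this item), and item~2 rests on the same ``left columns preserve the cokernel class, reduction minimises the lowest $1$'' reasoning. The one point the paper makes explicit that you elide is a short case split for item~2: when $\tau\notin\A\cup\B$, the second $\tau$-column of $D^\Phi$ is itself a relative cycle with boundary purely in $\A$, and the paper observes that the very first reduction step necessarily adds the first $\tau$-column (both have their lowest $1$ in the $\tau$-row), after which the column stores the sum --- a genuine representative of the dying class --- and from there your argument and the paper's coincide.
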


\begin{proof}
1. was  observed in the proof of Theorem \ref{thm2}.
 To realize 2., suppose we run the reduction algorithm. If $\tau\in\A\cup \B$ corresponds to a double column, the second column stores a representative of the tunnel to begin with. If $\tau\notin\A\cup\B$ corresponds to a double column, when the algorithm gets to the second column, it will start by adding the first $\tau$ column since both have their lowest 1 in the $\tau$ row. The second column now stores a representative of the tunnel that dies at time $\tau$. By adding columns to the left of $\tau$, we can get all equivalent representations of the tunnel. By pushing the lowest 1 as far up as possible, we get a representative that is born as early as possible. The lowest 1 after the reduction marks the birth cell and the whole column gives a representation of the tunnel at birth. 
 \end{proof}

We note that the representatives found by Algorithm \ref{alg:pair} for the birth and death tunnel between $\A $ and $\B$ is by no means unique and there is no guarantee that the algorithm finds the most natural choice of representative. For instance, for the point cloud in Section~\ref{sec:point_cloud}, 
the representative needs not form the surface of the tunnel. There might be some cells inside the representative.

\section{Discussion and future directions}

In this paper, we have considered surfaces connecting two spaces. We believe that this could easily be extended to three or more spaces by an iterative procedure. We leave this for future investigation.

As mentioned in Section \ref{sec:point_cloud}, in the specific case of a point cloud $P$, it might make sense to model the Voronoi dual complex $\mathbb{V}_{-r}$ and identify tunnels by computing $\Cok{\Phi_0}$. This would have the advantage that the representatives found by the algorithm are paths connecting top and bottom, which would be more easily interpretable. We have chosen to work with the alpha-complex in this paper since this is most common, however, the dual perspective seems promising. 

The results in the point cloud case are quite sensitive to the thickness of top and bottom slices. If they are too thin, they are unlikely to contain the boundary of a tunnel, but if they are too thick, 
 degenerate cases from Figure \ref{fig:degenerate} are more likely to occur. One way of solving this problem is to choose thick slices in top and bottom and remove 2-cells completely contained in top or bottom 
 because these 2-cells are not necessary for forming the desired cobordisms (see Figure \ref{fig:degenerate}a), but they are necessary for forming the degenerate cases (see Figures \ref{fig:degenerate}b, c and d).
 Another possibility is to consider the Voronoi dual, where only a set of potential start and end points are needed as $\A$ and $\B$. 

Atomic point clouds are often simulated with periodic boundary conditions (i.e. on a 3-torus) to avoid boundary effects. Thus, it would make sense to identify the left and right side of the box and the front and back side (forming a 2-torus times an interval) before computing the alpha-complex. This would avoid problems with tunnels exiting the sides of the bounding box. However, new problems may occur due to the non-trivial topology of the torus.  For instance, a tunnel through a solid box as in Figure \ref{fig:torus} would not be counted since the top and bottom cycle would be trivial in $H_*(\A) $ and $H_*(\B)$, respectively. See also \cite{teresa,periodic} for more on the challenges with periodic boundary conditions.

\begin{figure}
    \centering
    \includegraphics[width=0.3\linewidth]{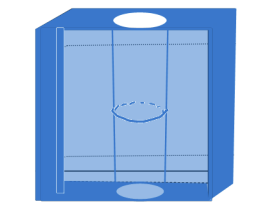}
    \caption{With periodic boundaries, this would not be counted as a tunnel.}
    \label{fig:torus}
\end{figure}

\section*{Acknowledgements}
Y. B. B. and L. F. were funded by the Independent Research
Fund Denmark, grant number 1026-00037. T. H. was partially supported by the European Research Council (ERC) Horizon 2020, grant number 788183.

\bibliographystyle{plain}
\bibliography{references.bib}

\appendix
\section{The Voronoi dual filtered complex}\label{appendix}

Consider a finite point cloud $ P\subseteq \R^d$ with points in general position, i.e. no $k+2$ points lie on a $(k-1)$-dimensional sphere.  
Let $\mathbb{V}$ be  the Voronoi complex associated with $P$ and let $\X$ be its dual Delauney complex, see \cite{edHar}. Each $k$-cell $\sigma$ in $\X$ corresponds to a dual $(d-k)$-cell $\sigma^*$ in $\mathbb{V}$ and vice versa. The complexes are dual in the sense that if $\tau \subseteq \sigma$ in $\X$ then $\sigma^* \subseteq \tau^*$ in $\mathbb{V}$. Let $f$ be the filter function on $\X$ that defines the alpha-complexes $\X_r = f^{-1}(\infty,r]$ for $r\geq 0$.
Define the filter function $f^*$ on $\mathbb{V}$ by $f^*(\sigma^*)=-f(\sigma)$. Then $\X$ and $\mathbb{V}$ are dual filtered complexes in the sense of \cite{dual}. In particular, the boundary matrix for $\mathbb{V}$ is found by transposing the boundary matrix for $\X$ and reversing the order of the rows and columns. Thus, also their persistent homology are closely related \cite{dual}.

For a fixed filtration value $r$, it is well-known that $P_r=\bigcup_{p\in P}\overline{B}_r(p)$, where $\overline{B}_r(p)$ denotes the closed ball around $p$ of radius $r$,  deformation retracts onto $\X_r$. We claim that also the complement of $ P_r$ is homotopy equivalent to $\mathbb{V}_{-r}=(f^*)^{-1}(-\infty,-r)$. To make this statement rigorous, we have to be a bit careful about what happens at infinity. The precise statement we show is the following.

\begin{theorem}
Let $B$ be a box in $\R^d$ containing $P$ such that the distance from $\partial B$ to $P$ is greater than $r$. Then, $B\setminus P_r$ 
deformation retracts onto $\partial B \cup (\mathbb{V}_{-r}\cap B)$.  Moreover, $S^d\setminus \X_r$ deformation retracts onto $\mathbb{V}_{-r} \cup \infty$, where ${S}^{d}$ is the one-point compactification of $\R^d$ and $\mathbb{V}_{-r} \cup \{\infty\}$ is the one-point compactification of $\mathbb{V}_{-r}$.
\end{theorem}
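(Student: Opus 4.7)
My approach is to build the deformation retraction explicitly using the Voronoi cell structure, based on the following geometric identity: for every Delaunay simplex $\sigma$ and every $x$ in the dual Voronoi face $\sigma^*$, we have $d(x,P)^2 = d(x,c(\sigma))^2 + R(\sigma)^2$, where $c(\sigma)$ and $R(\sigma)$ denote the circumcenter and circumradius of $\sigma$. Thus on each Voronoi face $\sigma^*$ the function $d(\cdot,P)$ is a strictly monotone radial function based at $c(\sigma)\in\sigma^*$, and $\sigma^*\cap P_r$ is empty exactly when $\sigma\notin\X_r$ (equivalently $\sigma^*\subseteq\mathbb{V}_{-r}$) and otherwise is a convex neighborhood of $c(\sigma)$.

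For the first statement, the plan is to construct the deformation retraction of $B\setminus P_r$ onto $\partial B\cup(\mathbb{V}_{-r}\cap B)$ by induction on descending dimension of Voronoi cells. On each top-dimensional cell $p^*$ with $p\in P$, push $p^*\cap B\setminus \overline{B}_r(p)$ radially away from $p$ until points reach $\partial p^*\cap B$ or $p^*\cap\partial B$; monotonicity of $d(\cdot,P)$ along each ray keeps the flow outside $P_r$, and the hypothesis $d(\partial B,P)>r$ ensures $\partial B$ lies in the target. Iterating on each lower-dimensional face $\sigma^*$ with $\sigma\in\X_r$ via a radial push from $c(\sigma)$ within the affine hull of $\sigma^*$ drives everything onto $\partial B\cup(\mathbb{V}_{-r}\cap B)$; faces with $\sigma\notin\X_r$ lie in $\mathbb{V}_{-r}$ already and are left fixed. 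Because each radial push is the identity on the boundary of the face it acts on, the local retractions concatenate into a single continuous deformation retraction.

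For the second statement, I would first extend the first claim to $S^d\setminus P_r$ by applying it to a sequence of boxes $B_n$ of side length tending to infinity and simultaneously collapsing $S^d\setminus\mathrm{int}(B_n)$ onto $\{\infty\}$, yielding that $S^d\setminus P_r$ deformation retracts onto $\mathbb{V}_{-r}\cup\{\infty\}$. To pass from $\setminus\X_r$ to $\setminus P_r$, I would deformation retract $P_r\setminus\X_r$ onto $\partial P_r$ cellwise: each piece $p^*\cap\overline{B}_r(p)$ meets $\X_r$ in a subcomplex that is starshaped about $p$ (any Delaunay simplex containing $p$ meets $p^*$ in a cone with apex $p$, since $p^*$ is convex, $p\in p^*$, and so is $\overline{B}_r(p)$), so a radial push outward from $p$ deformation retracts $p^*\cap\overline{B}_r(p)\setminus\X_r$ onto the boundary of the piece.

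The main technical obstacle I expect is ensuring that the cellwise retractions glue into a globally continuous deformation. In the first statement this reduces to the boundary-identity property of radial push: each push on $\sigma^*$ fixes $\partial\sigma^*$, so subsequent lower-dimensional pushes inherit correctly-set boundary data. In the second statement the gluing is subtler because radial pushes from different centers $p,q$ do not automatically agree on their shared Voronoi interface $p^*\cap q^*\cap\overline{B}_r(p)=p^*\cap q^*\cap\overline{B}_r(q)$; the standard fix is to reparameterize each push by the \emph{fraction} of the distance traveled from start to exit on $\partial B_r(p)$, so that adjacent retractions coincide on the common face.
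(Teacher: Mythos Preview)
Your approach to the first statement is the paper's: descending induction on the Voronoi skeleta, pushing each face $\sigma^*$ that meets $P_r$ radially onto its boundary, and leaving faces of $\mathbb{V}_{-r}$ fixed. The paper also identifies $P_r\cap\sigma^*$ as convex (writing it as $\overline{B}_r(p)\cap\sigma^*$ for any nearest $p$); your Pythagorean identity is a cleaner way to see the same thing.

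The gap is your assertion $c(\sigma)\in\sigma^*$. This fails for non-Gabriel Delaunay simplices: whenever the smallest circumscribing ball of $\sigma$ contains another point of $P$ (for instance the long edge of an obtuse Delaunay triangle), the circumcenter lies in $\operatorname{aff}(\sigma^*)$ but outside $\sigma^*$. Your identity still shows that $P_r\cap\sigma^*$ is the trace on $\sigma^*$ of a ball about $c(\sigma)$, but a radial push from an \emph{exterior} center need not fix all of $\partial\sigma^*\setminus P_r$: a ray from $c(\sigma)$ can enter $\sigma^*$ at a boundary point already outside $P_r$, and your push then moves that entry point to the exit point of the ray. This breaks exactly the boundary-identity property on which your gluing argument rests. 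The paper sidesteps the issue by choosing as center any point of $P_r$ in the relative interior of the Voronoi face; with an interior center every point of $\partial\sigma^*$ is an exit point and is therefore fixed.

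For the second statement the paper is far more direct: it simply reruns the same skeleton induction on $S^d$, observing that the radial push on an unbounded Voronoi cell extends continuously to the added point $\infty$. Your route through a limit of boxes and then a separate retraction $S^d\setminus\X_r\to S^d\setminus P_r$ is more elaborate than what the paper does. That extra step does address something the paper's one-line argument leaves implicit (the theorem is stated for $\X_r$, while the cellwise push naturally treats $P_r$), but your star-shapedness claim for $\X_r\cap p^*\cap\overline{B}_r(p)$ is not justified as written: simplices of $\X_r$ that do not have $p$ as a vertex may still meet $p^*$, and their contribution need not contain $p$, so the union need not be star-shaped about $p$.
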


\begin{proof}
Let $\mathbb{V}^k$ denote the $k$-skeleton of $\mathbb{V}$. We construct the deformation retraction inductively by pushing $(B\cap \mathbb{V}^k)\setminus P_r$ onto $(\partial B\cup (B\cap (\mathbb{V}_{-r}\cup \mathbb{V}^{k-1})))\setminus P_r$.

We start with $k=d$. Each Voronoi $d$-cell $\sigma$ contains exactly one point $p_\sigma$ from $P$. Then $B\setminus P_r$ is the union of the sets $(B\cap \sigma) \setminus \overline{B}_r(p_\sigma)$ for all Voronoi $d$-cells $\sigma$. 
Since both $(B\cap \sigma)$ and $ \overline{B}_r(p_\sigma)$ are convex, we can push $(B\cap \sigma) \setminus \overline{B}_r(p_\sigma)$ radially along line segments from $p_\sigma$ towards $\partial (B\cap \sigma) \setminus \overline{B}_r(p_\sigma)$. Repeating this for each $\sigma$ yields a deformation retraction onto $(\partial B \cup (B\cap \mathbb{V}^{d-1}))\setminus P_r$. Note that $\mathbb{V}^{d-1}$ can also be written as $\mathbb{V}_{-r} \cup \mathbb{V}^{d-1}$ because $\mathbb{V}_{-r} \subseteq \mathbb{V}^{d-1}$ for $r\geq 0$ as all Delaunay vertices have filter value $0$ (and hence there are no $d$-dimensional Voronoi cells in $\mathbb{V}_{-r}$).

Now suppose we have deformed $(B\cap \mathbb{V})\setminus P_r$ onto $(\partial B \cup (B\cap (\mathbb{V}_{-r}\cup \mathbb{V}^{k})))\setminus P_r$. Let $\sigma$ be a $k$-cell of $\mathbb{V}$ whose interior is intersected by $P_r$ in at least one point $p_\sigma$. Any point in $P_r\cap \sigma$ has equal distance less than or equal to $r$ to the nearest $d-k+1$ points. Let $p$ be any of these points. Then,  $P_r\cap \sigma = \overline{B}_r(p)\cap \sigma$, which is again convex containing $p_\sigma$. Again, we may push $\sigma\setminus P_r$ radially from $p_\sigma$ towards $\partial (B\cap\sigma) \setminus P_r$. A $k$-cell $\sigma$ that is not intersected by $P_r$ is member of $\mathbb{V}_{-r}$ and hence left untouched. This completes the inductive step. When we have pushed $(B\cap \mathbb{V})\setminus P_r$ all the way to $(\partial B \cup (B\cap (\mathbb{V}_{-r}\cup \mathbb{V}^{0})))\setminus P_r$, we are done, because $(\partial B \cup (B\cap (\mathbb{V}_{-r}\cup \mathbb{V}^{0})))\setminus P_r = \partial B \cup (B\cap \mathbb{V}_{-r})$. To see this, note that $(\mathbb{V}_{-r}\cup \mathbb{V}^{0})\setminus P_r = \mathbb{V}_{-r}$, because the Voronoi vertices that are not in $P_r$ are in $\mathbb{V}_{-r}$.

The second statement is proved similarly by noting that pushing points radially out also works for the one-point compactification of the unbounded Voronoi cells.

\end{proof}

\end{document}